\newcommand{\cG}{\mathcal{G}}
\newcommand{\cN}{\mathcal{N}}
\newcommand{\cP}{\mathcal{P}}
\newcommand{\RR}{\mathbb{R}}
\newcommand{\R}{\RR}
\newcommand{\E}{\mathbb{E}}
\newcommand*{\kl}[3][]{%
\ifthenelse{\isempty{#1}}{\operatorname{D}(#2\,\|\,#3)}%
{\operatorname{D}(#2\,\|\,#3\mid#1)}%
}
\newcommand*{\triplenorm}[1]{{\left\vert\kern-0.25ex\left\vert\kern-0.25ex\left\vert #1
    \right\vert\kern-0.25ex\right\vert\kern-0.25ex\right\vert}}
\newcommand*{\defeq}{\coloneqq}
\newcommand*{\rd}{\mathrm{d}}
\newcommand*{\dd}{\, \rd}
\newcommand{\eps}{\varepsilon}
\renewcommand{\phi}{\varphi}
\newcommand{\mmid}{\,\Vert\,}
\newcommand{\deq}{\coloneqq}
\def\balign#1\ealign{\begin{align}#1\end{align}}
\def\baligns#1\ealigns{\begin{align*}#1\end{align*}}
\def\balignat#1\ealign{\begin{alignat}#1\end{alignat}}
\def\balignats#1\ealigns{\begin{alignat*}#1\end{alignat*}}
\def\bitemize#1\eitemize{\begin{itemize}#1\end{itemize}}
\def\benumerate#1\eenumerate{\begin{enumerate}#1\end{enumerate}}
\newenvironment{talign*}
 {\csname align*\endcsname}
 {\endalign}
\newenvironment{talign}
 {\csname align\endcsname}
 {\endalign}
\def\balignst#1\ealignst{\begin{talign*}#1\end{talign*}}
\def\balignt#1\ealignt{\begin{talign}#1\end{talign}}
\let\originalleft\left
\let\originalright\right
\renewcommand{\left}{\mathopen{}\mathclose\bgroup\originalleft}
\renewcommand{\right}{\aftergroup\egroup\originalright}
\def\tinycitep*#1{{\tiny\citep*{#1}}}
\def\tinycitealt*#1{{\tiny\citealt*{#1}}}
\def\tinycite*#1{{\tiny\cite*{#1}}}
\def\smallcitep*#1{{\scriptsize\citep*{#1}}}
\def\smallcitealt*#1{{\scriptsize\citealt*{#1}}}
\def\smallcite*#1{{\scriptsize\cite*{#1}}}
\def\<{\left\langle} 
\def\>{\right\rangle}
\def\defeq{\coloneqq} 
\DeclareSymbolFont{rsfs}{U}{rsfs}{m}{n}
\DeclareSymbolFontAlphabet{\mathscrsfs}{rsfs}
\renewenvironment{proof}{\noindent\textbf{Proof.}\hspace*{.3em}}{\qed \vspace{.1in}}
\newenvironment{proof-sketch}{\noindent\textbf{Proof Sketch}
  \hspace*{1em}}{\qed\bigskip\\}
\newenvironment{proof-idea}{\noindent\textbf{Proof Idea}
  \hspace*{1em}}{\qed\bigskip\\}
\newenvironment{proof-of-lemma}[1][{}]{\noindent\textbf{Proof of Lemma {#1}}
  \hspace*{1em}}{\qed\\}
  \newenvironment{proof-of-proposition}[1][{}]{\noindent\textbf{Proof of Proposition {#1}}
  \hspace*{1em}}{\qed\\}
\newenvironment{proof-of-theorem}[1][{}]{\noindent\textbf{Proof of Theorem {#1}}
  \hspace*{1em}}{\qed\\}
\newenvironment{proof-attempt}{\noindent\textbf{Proof Attempt}
  \hspace*{1em}}{\qed\bigskip\\}
\newcommand{\infFI}{\operatorname{FI}_\infty}
\global\long\def\esssup{{\rm{ess\,sup}}}%
\theoremstyle{plain}
\newtheorem{theorem}{Theorem}[section]
\newtheorem{proposition}[theorem]{Proposition}
\newtheorem{lemma}[theorem]{Lemma}
\newtheorem{corollary}[theorem]{Corollary}
\theoremstyle{definition}
\theoremstyle{remark}
\newtheorem{remark}[theorem]{Remark}
\title{Stability of the Kim--Milman flow map}
\author{
Sinho Chewi\thanks{Department of Statistics and Data Science, Yale University. \tt{sinho.chewi@yale.edu}} \and 
Aram-Alexandre Pooladian\thanks{Institute for Foundations of Data Science, Yale University. \tt aram-alexandre.pooladian@yale.edu}\and
Matthew S. Zhang\thanks{Department of Computer Science, University of Toronto. \tt matthew.zhang@mail.utoronto.ca} 
}
\begin{document}
\maketitle
\abstract{In this short note, we characterize stability of the Kim--Milman flow map---also known as the probability flow ODE---with respect to variations in the target measure {in relative Fisher information}. 
}

\section{Introduction}\label{sec:intro}
In general, there are infinitely many maps which transport a fixed source distribution $\rho$ to a target distribution $\mu$, {where both measures are} in $\cP(\R^d)$. Let $T_\bullet$ denote a method of generating such transport maps; thus, $T_{\bullet}^{\rho\to\mu}$ is a transport map from $\rho$ to $\mu$ for every $\mu \in \cP(\R^d)$, meaning that for $X\sim\rho$, $T_\bullet^{\rho\to\mu}(X)\sim\mu$. A property of fundamental interest for such a method is its stability with respect to variations in the target measure: how much do the transport maps vary if the target measures vary? That is, for another target measure $\nu$ and under a broad class of assumptions, we want to understand inequalities of the form \looseness-1
\begin{align}\label{eq:stab_intro}
\|T^{\rho\to\mu}_\bullet - T_\bullet^{\rho\to\nu}\|^2_{L^2(\rho)} \lesssim \mathrm{D}(\mu,\nu)\,,
\end{align}
where $\rm{D}(\mu,\nu)$ is some dissimilarity metric between the two target measures.

To the best of our knowledge, the study of inequalities of the form \eqref{eq:stab_intro} has been limited to \emph{optimal} transport maps \cite{gigli2011holder, kitagawa2025stability,
letrouit2024gluing,
balakrishnan2025stability}, denoted $T_{\rm OT}^{\rho\to\mu}$, or \emph{entropic} transport maps \cite{carlier2024displacement,
divol2025tight}, denoted $T_{\rm EOT}^{\rho\to\mu}$. In these instances, the natural dissimilarity metric becomes the (squared) $2$-Wasserstein distance between $\mu$ and $\nu$, and the underlying constant depends on properties of the source $\rho$, and either the class of target measures $\mu$ or a priori assumptions on the (entropic) optimal transport map.
Existing bounds are of the form\looseness-1
\begin{align*}
\|T^{\rho\to\mu}_{\rm OT} - T_{\rm OT}^{\rho\to\nu}\|_{L^2(\rho)} \leq C W_2^{{\beta}}(\mu,\nu)\,, \quad \text{ or } \quad \|T^{\rho\to\mu}_{\rm EOT} - T_{\rm EOT}^{\rho\to\nu}\|_{L^2(\rho)} \leq C_\eps W_2(\mu,\nu)\,,
\end{align*}
where {$\beta \in (0,1]$ and}, for the entropic maps, $\eps > 0$ is the regularization parameter and  $C_\eps \nearrow +\infty$ as $\eps \searrow 0$; see Section~\ref{sec:related} for more information. 

In this note, we study stability properties of a different transport map called the Kim--Milman (reverse) heat flow map \cite{kim2012generalization}, which has recently gained popularity in {mathematics \cite{neeman2022lipschitz,klartag2023spectral,MikShe23HeatFlow,lopez2025bakry,BriPed25HeatFlow,fathi2024transportation,conforti2025coupling,serres2026contractive,ge2025generalization}} as well as in the machine learning literature under the moniker ``probability flow ODE'' \cite{Song+21SDE, Kim+24Consistency} due to its efficacy in generative modeling tasks. Unlike optimal or entropic transport maps, we stress that this transport map is defined dynamically. As a brief description, let $\gamma$ denote the $d$-dimensional standard Gaussian distribution, and let $\mu$ be another $d$-dimensional probability distribution; we want to study the stability of the following (reverse) ODE system \looseness-1
\begin{align*}
    \dot{X}_t &= X_t + \nabla \log \mu Q_{T-t}(X_t) = \nabla \log Q_{T-t}\Bigl[\frac{\mu}{\gamma}\Bigr](X_t)\,,
\end{align*}
where $0 \ll T < \infty$, $Q_{s}$ is the Ornstein--Uhlenbeck semigroup at time $s > 0$, and we initialize $X_0 \sim \mu Q_T \approx \gamma$. Let $T_{\rm KM}^\mu$ be the flow map of the ODE system with $T = \infty$ (rigorously, the limit of the flow maps up to time $T$, taking $T \nearrow \infty$), which transports $(T_{\rm KM}^\mu)_\sharp\gamma = \mu$. By imposing regularity assumptions on $\mu$, we will prove that 
\begin{align}\label{eq:result_intro}
\|T^{\mu}_{\rm KM} - T_{\rm KM}^{\nu}\|_{L^2(\gamma)} \lesssim \sqrt{\rm{FI}(\nu\mmid \mu)}\,,
\end{align}
where $\rm{FI}(\nu\mmid\mu) \deq \|\nabla\log(\nu/\mu) \|^2_{L^2(\nu)}$, where the underlying constant is explicit{; see Theorem~\ref{thm:l2_result}}. We then show how our analysis can be used to prove bounds of the form
\begin{align}\label{eq:result_intro_2}
\|T^{\mu}_{\rm KM} - T_{\rm KM}^{\nu}\|_{L^\infty(\gamma)}\lesssim \sqrt{\infFI(\nu\mmid \mu)}\,,
\end{align}
where $\infFI(\nu\mmid\mu) \defeq \esssup_\nu \|\nabla \log(\nu/\mu)\|^2${; see Theorem~\ref{thm:inffi}}. We consider as applications the case where $\mu$ is a perturbation of a strongly log-concave measure (leveraging recent results by \cite{BriPed25HeatFlow, wibisono2025mixing}) and when $\mu$ has an asymptotically positive convexity profile (as introduced by \cite{conforti2024weak}). We stress that, at present, these cases are not covered by stability results for (entropic) optimal transport maps. 

{For future work, it would be natural to extend our results to measures which live on Riemannian manifolds \cite{fathi2024transportation} or to other flow maps (e.g., the flow map induced by general stochastic interpolants \cite{Lip+23FlowMatching, AlbBofVan25StochInterpolants} or the Schr\"odinger bridge flow map  \cite{Leo14Schrodinger, shi2023diffusion, PooNil25SB}).}

\subsubsection*{Acknowledgments}
We thank Zhou Fan and Daniel Lacker for helpful discussions{, as well as Katharina Eichinger for pointing out a technical flaw in our original proof, and the referees for providing valuable feedback}. AAP thanks the Foundations of Data Science at Yale University for support. MSZ is funded by an NSERC CGS-D award.

\subsubsection*{Notation}

We write $\cP(\R^d)$ for the space of probability measures over $\R^d$.
Given a probability measure $\mu$ which admits a Lebesgue density, we abuse notation and write $\mu$ for the density as well as for the measure.
For a symmetric matrix $\Sigma$, we write $\|x\|_\Sigma \deq \sqrt{\langle x, \Sigma\,x\rangle}$.

\section{Background}\label{sec:background}
\subsection{{Kim--Milman flow map}}\label{sec:pfode_bg}
We now {sketch the derivation of the reverse heat flow due to Kim and Milman \cite{kim2012generalization}, also known as the probability flow ODE}.

For ${X}^{\rightarrow}_0 \sim \mu \in \cP(\R^d)$, recall the forward Ornstein--Uhlenbeck process
\begin{align*}
    {\rm d}{X}^{\rightarrow}_t = -{X}^{\rightarrow}_t {\rm d}t + \sqrt{2}\,{\rm d}B_t\,,
\end{align*}
where $(B_t)_{t\ge 0}$ is standard Brownian motion. Note that as $t\to\infty$, $q_t \deq {\rm Law}(X_t^{\rightarrow})\to\gamma = \cN(0,I)$. Now, run the stochastic differential equation (SDE) for time $0 \ll T < +\infty$. Then, the reverse SDE system is given by
\begin{align}\label{eq:rev_OU}
    {\rm d}{X}^{\leftarrow}_t = \bigl({X}^{\leftarrow}_t + 2\, \nabla \log q_{T-t}({X}^{\leftarrow}_t)\bigr)\,{\rm d}t + \sqrt{2}\,{\rm d}B_t\,, \qquad X_0^\leftarrow \sim q_T\,,
\end{align}
where ${\rm Law}(X^\leftarrow_s) = q_{T-s}$.
(Precise conditions for the well-posedness of the time reversal can be found in~\cite{Cat+23TimeReversal}.)
Note that the Brownian motion is also reversed. The corresponding Fokker--Planck equation for the reverse SDE is then
\begin{align*}
\partial_t q_{T-t} + \nabla\cdot(q_{T-t}\, ({\rm{id}} + 2\, \nabla \log q_{T-t})) = \Delta q_{T-t} = \nabla \cdot(q_{T-t} \nabla \log q_{T-t})\,.
\end{align*}
We can incorporate the diffusion term above into the drift, resulting in the continuity equation
\begin{align*}
\partial_t q_{T-t} + \nabla\cdot(q_{T-t}\, ({\rm{id}} +  \nabla \log q_{T-t})) = 0\,,
\end{align*}
which describes the evolution of the marginal law of the ODE system
\begin{align}\label{eq:kimmilman}
    \dot X^{\leftarrow}_t = {X}^{\leftarrow}_t + \nabla \log q_{T-t}({X}^{\leftarrow}_t)\,, \qquad X_0^\leftarrow \sim q_T\,.
\end{align}
Note that while~\eqref{eq:rev_OU} and~\eqref{eq:kimmilman} differ as stochastic processes, by construction they have the same marginal laws $(q_{T-t})_{t\in [0,T]}$.

Let $T_{\rm KM}^{\mu,T}$ be the flow map corresponding to the ODE system~\eqref{eq:kimmilman}; thus, $(T_{\rm KM}^{\mu, T})_\sharp q_T = \mu$.
Finally, we let $T_{\rm KM}^\mu \deq \lim_{T\to\infty} T_{\rm KM}^{\mu,T}$ denote the Kim--Milman map.

\subsection{Related work}\label{sec:related}
\paragraph{Stability of optimal transport maps.} Stability of optimal transport maps was first studied by Gigli in~\cite{gigli2011holder}. His main result states that if one of the transport maps, say $T^{\rho\to\mu}_{\rm OT}$, is $L$-Lipschitz and the support of $\mu$ is compact (say in a ball with radius $R$), then
\begin{align*}
\|T^{\rho\to\mu}_{\rm OT} - T_{\rm OT}^{\rho\to\nu}\|_{L^2(\rho)} \lesssim (LR)^{1/2}\, W_2^{1/2}(\mu,\nu)\,.
\end{align*}
{Using different arguments, Delalande and Merigot \cite{delalande2023quantitative} showed that, so long as $\mu,\nu$ have compact support and $\rho$ has density bounded above and below on a compact convex subset of $\R^d$, then 
\begin{align*}
\|T^{\rho\to\mu}_{\rm OT} - T_{\rm OT}^{\rho\to\nu}\|_{L^2(\rho)} \lesssim W_2^{1/6}(\mu,\nu)\,.
\end{align*}
Letrouit and M\'erigot in~\cite{letrouit2024gluing} pushed these arguments to accommodate more general source measures, and \cite{kitagawa2025stability} proposed an extension to Riemannian manifolds.} 
On the other hand, it is not possible to establish H\"older stability in general with an exponent better than $1/2$, due to the counterexample in~\cite{gigli2011holder}.

Going a step further, Manole, Balakrishnan, Niles-Weed, and Wasserman~\cite{manole2024plugin} show that if one of the optimal transport maps is bi-Lipschitz, i.e., if $0 \prec \ell I \preceq D T^{\rho\to\mu}_{\rm OT} \preceq L I$, then we have the stronger bound
\begin{align*}
\|T^{\rho\to\mu}_{\rm OT} - T_{\rm OT}^{\rho\to\nu}\|_{L^2(\rho)} \leq \Bigl(\frac{L}{\ell}\Bigr)^{1/2}\, W_2(\mu,\nu)\,.
\end{align*}
For more results on this topic, see the recent monograph by Letrouit \cite{letrouit2025quantitative}. 

\paragraph{Stability of entropic transport maps.}
Entropic transport maps were recently introduced in \cite{pooladian2021entropic} to estimate optimal transport maps from samples; see also \cite{rigollet2025sample,
pooladian2023minimax}. The stability of entropic transport maps is more recent, first investigated by Carlier, Chizat, and Laborde \cite{carlier2024displacement}. Specializing their results, they prove that if all measures $\rho,\mu,\nu$ have compact support, then  
\begin{align*}
    \|T^{\rho\to\mu}_{\rm EOT} - T_{\rm EOT}^{\rho\to\nu}\|_{L^2(\rho)} \lesssim \exp(c/\eps)\, W_2(\mu,\nu)\,,
\end{align*}
where $c > 0$ is a constant depending on the diameter of the supports. Crucially, as $\eps \searrow 0$, the results for optimal transport maps are not recovered in any regime. Recently, a tight stability bound for entropic transport maps was proven by Divol, Niles-Weed, and {the second author}~\cite{divol2025tight}, showing that
\begin{align*}
    \|T^{\rho\to\mu}_{\rm EOT} - T_{\rm EOT}^{\rho\to\nu}\|_{L^2(\rho)} \lesssim \frac{R^2}{\eps}\,W_2(\mu,\nu)\,,
\end{align*}
where the sole requirement is that all three measures have compact support (say in a ball of radius $R > 0$). However, the results of \cite{divol2025tight} are more general. For example, if one of the entropic transport maps is bi-Lipschitz (which can be ensured in certain situations \cite{chewi2023entropic}), then they are able to recover the results of \cite{manole2024plugin} by taking $\eps \searrow 0$.\looseness-1

\section{Main results}\label{sec:main_results}
We are interested in understanding the dynamics of the system \eqref{eq:kimmilman}. To this end, let $(X_t)_{t\in[0,T]}$ (resp.\ $(Y_t)_{t\in[0,T]}$) be the reverse dynamics from ${q_T^\mu}$ to $\mu$ (resp.\ ${q_T^\nu}$ to $\nu$) for $0\ll T < +\infty$ with $q^\mu_{{T-t}} \defeq {\rm{Law}}(X_t)$ (resp.\ $q^\nu_{{T-t}} \defeq {\rm{Law}}(Y_t)$). Thus, $T_{\rm KM}^\mu(x)$ is the terminal point $X_T$ of the ODE~\eqref{eq:kimmilman} with $T\to\infty$ and initialized at $x$, and similarly for $T_{\rm KM}^\nu(x)$.
For a rigorous justification of this procedure, see, e.g., \cite{kim2012generalization, MikShe23HeatFlow}. Our calculations will require that for all $s \geq 0$
\begin{align}\label{eq:assump}\tag{$\Theta$}
\nabla^2 \log Q_{s}\Bigl[\frac{\mu}{\gamma}\Bigr] (\cdot) = I + \nabla^2 \log q_s^\mu(\cdot) \preceq {\theta}_s I
\end{align}
{for some constants $(\theta_s)_{s \ge 0}$.} Our main results will be of the form
\begin{align*}
    \|T_{\rm KM}^\mu - T_{\rm KM}^\nu \|_{L^2(\gamma)} \lesssim \sqrt{{\rm FI}(\nu\mmid\mu)}\,,
\end{align*}
{where the omitted constants will explicitly depend on assumptions on $\mu$.}
\subsection{Main computation}\label{sec:main_computation}
{Since we only place assumptions on $\mu$, even the existence of $T_{\rm KM}^\nu$ is non-trivial. To avoid technical complications, we assume that $T_{\rm KM}^\mu$, $T_{\rm KM}^\nu$ are well-defined in the sense that $T_{\rm KM}^{\mu,T} \to T_{\rm KM}^\mu$ uniformly on compact sets and similarly for $T_{\rm KM}^\nu$.\footnote{{This is stronger than the convergence established in~\cite{MikShe23HeatFlow}.}}} {For more complete discussions, see \cite{kim2012generalization,MikShe23HeatFlow}.}

In this section, we prove our main theorem.

\begin{theorem}\label{thm:l2_result}
Let $\mu$ be such that \eqref{eq:assump} is satisfied for some constants $(\theta_s)_{s \geq 0}$.
{Assume
\begin{align*}
    \Lambda_\infty \defeq \lim_{T\to\infty} \Lambda_T < \infty\,, \qquad\text{where}~\Lambda_T \defeq \int_0^T \exp\Bigl(\int_0^t (3\theta_s-1)\dd s\Bigr)\dd t\,.
\end{align*}
}
Then, for any $\nu$ such that $\rm{FI}(\nu\mmid\mu) < \infty$,
\begin{align}\label{eq:main_result_equation}
    \|T_{\rm KM}^\mu - T_{\rm KM}^\nu\|_{L^2(\gamma)} \leq \Lambda_\infty \sqrt{{\rm FI}(\nu\mmid \mu)}\,.
\end{align}
\end{theorem}
To prove our main theorem, we require the following lemma due to Wibisono \cite{wibisono2025mixing}.
\begin{lemma}[{\citep[Theorem 4(i)]{wibisono2025mixing}}]\label{lem:andre}
For any $t > 0$,
\begin{align*}
    \frac{{\rm d}}{{\rm d}t}\, {\rm FI}(q_{t}^\nu\mmid q_t^\mu)\leq -2\, \E_{q_t^\nu}\Bigl\|\nabla \log \frac{q^\nu_{t}}{q_t^\mu}\Bigr\|^2_{(-2\nabla^2 \log q_t^\mu {-} I)}\,.
\end{align*}
\end{lemma}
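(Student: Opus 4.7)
The plan is to compute $\tfrac{\rd}{\rd t}\mathrm{FI}(q_t^\nu\mmid q_t^\mu)$ directly via the Fokker--Planck equations obeyed by $q_t^\mu$ and $q_t^\nu$, isolate a nonpositive Bochner-square contribution that can be discarded, and identify the remaining terms as the claimed Hessian quadratic form. This is essentially a $\Gamma_2$-type computation adapted to a time-dependent reference measure; as a sanity check, in the stationary case $q_t^\mu=\gamma$ the bound should collapse to the standard exponential decay $\mathrm{FI}'\le -2\,\mathrm{FI}$ for Langevin to its invariant Gaussian.

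First I would parameterize via $\phi_t\defeq\log(q_t^\nu/q_t^\mu)$, so that $\mathrm{FI}(q_t^\nu\mmid q_t^\mu)=\int|\nabla\phi_t|^2\,\rd q_t^\nu$, and subtract the log-Fokker--Planck equations for $q_t^\nu$ and $q_t^\mu$ (both obeying $\partial_t q=\nabla\cdot(q(\nabla\log q+x))$) to obtain a Hamilton--Jacobi-type evolution
\begin{align*}
\partial_t\phi_t = \Delta\phi_t+|\nabla\phi_t|^2+\nabla\phi_t\cdot(x+2\nabla\log q_t^\mu),
\end{align*}
whose drift carries a time-dependent correction set by the reference $q_t^\mu$.

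Next I would differentiate $F(t)\defeq\int|\nabla\phi_t|^2 q_t^\nu$ by the product rule and perform two integrations by parts (once against $\nabla\partial_t\phi_t$ and once against the divergence form of $\partial_t q_t^\nu$), producing three families of contributions: the OU diffusion, the OU drift $x$, and the time-dependent $q_t^\mu$-correction. The diffusive contribution, via the classical Bochner identity, yields a nonpositive Bochner square plus a Hessian term in $\nabla^2\log q_t^\nu$; using $\nabla^2\log q_t^\nu=\nabla^2\log q_t^\mu+\nabla^2\phi_t$ absorbs the $\nabla^2\phi_t$ piece back into the Bochner structure, and the surviving Hessian-of-$\log q_t^\mu$ contributions combine with the OU-drift term $-2F(t)$ into the matrix form $-2\,\E_{q_t^\nu}\|\nabla\phi_t\|^2_{(-2\nabla^2\log q_t^\mu - I)}$. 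Dropping the nonpositive Bochner square then gives the stated bound.

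The main obstacle I expect is the bookkeeping: tracking the three families of contributions through the two integrations by parts and carefully canceling the cubic-in-$\nabla\phi_t$ terms against one another to isolate a clean quadratic form in $\nabla\phi_t$. Structurally, the "$-I$'' in the Hessian matrix is the unit Bakry--\'Emery curvature of the OU semigroup (contributed by the drift $x$), while "$-2\nabla^2\log q_t^\mu$'' encodes the time evolution of the reference measure; the fact that these combine into a single symmetric matrix is the main algebraic content of the identity.
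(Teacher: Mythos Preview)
The paper does not prove this lemma at all; it is quoted verbatim from \cite[Theorem 4(i)]{wibisono2025mixing} and used as a black box in the proof of Proposition~\ref{prop:main}. Your sketch, by contrast, supplies an actual argument, and it is the correct one: the standard Bakry--\'Emery\,/\,$\Gamma_2$ computation carried out with a time-dependent reference measure. The evolution equation you write for $\phi_t$ is exactly what one obtains by subtracting the two log-Fokker--Planck equations, and after the two integrations by parts the terms organize as you say: the ``Bochner square'' $-2\int \|\nabla^2\phi_t\|_{\rm F}^2\,\rd q_t^\nu$ (nonpositive, discarded) plus the quadratic form $-2\int \langle\nabla\phi_t,(-2\nabla^2\log q_t^\mu-I)\,\nabla\phi_t\rangle\,\rd q_t^\nu$. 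The cubic terms in $\nabla\phi_t$ and the cross terms with $\nabla\log q_t^\mu$ indeed cancel against the contribution from $\partial_t q_t^\nu$ once you write $\nabla\log q_t^\nu=\nabla\log q_t^\mu+\nabla\phi_t$, so the bookkeeping you flag as the main obstacle is routine. Your sanity check ($q_t^\mu\equiv\gamma$ gives $-2\nabla^2\log\gamma-I=I$, hence $F'\le-2F$) is also right.
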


\begin{proof}[Proof of Theorem~\ref{thm:l2_result}]
To start, we compute
\begin{align*}
    \partial_t \E\|X_t - Y_t\|^2 &= 2\, \E \langle X_t - Y_t, \dot{X}_t - \dot{Y}_t\rangle \\
    &= 2\,\E\Bigl\langle X_t - Y_t , \nabla \log Q_{T-t}\Bigl[\frac{\mu}{\gamma}\Bigr](X_t) - \nabla \log Q_{T-t}\Bigl[\frac{\nu}{\gamma}\Bigr](Y_t)\Bigr\rangle  \\
    &\leq  2\theta_{T-t}\,\E\|X_t - Y_t\|^2 \\
    &\qquad{} + 2\,\E \Bigl\langle X_t - Y_t, \nabla \log Q_{T-t}\Bigl[\frac{\mu}{\gamma}\Bigr](Y_t) - \nabla \log Q_{T-t}\Bigl[\frac{\nu}{\gamma}\Bigr](Y_t) \Bigr\rangle\,,
\end{align*}
where we used \eqref{eq:assump} in the last inequality. Using Cauchy--Schwarz, we obtain 
\begin{align}\label{eq:inequ}
    \partial_t \E\|X_t - Y_t\|^2 &\leq 2{\theta_{T-t}}\,\E\|X_t - Y_t\|^2 + 2\,(\E\|X_t - Y_t\|^2)^{1/2}\,{\rm{FI}}(q^\nu_{T-t}\mmid q^\mu_{T-t})^{1/2}\,,
\end{align}
where the relative Fisher information makes an appearance. Now, as $\nabla^2 \log q_t^\mu = \nabla^2 \log Q_t\bigl[\frac{\mu}{\gamma}\bigr] - I \preceq {(\theta_t-1)}\,I$, Lemma~\ref{lem:andre} simplifies to 
\begin{align*}
    \frac{{\rm d}}{{\rm d}t} {\rm FI}(q_{t}^\nu\mmid q_t^\mu)\leq -2\,(-2\,(\theta_t - {1})-1)\, {\rm FI}(q_t^\nu\mmid q_t^\mu) = -2\,(-2\theta_t + {1})\,{\rm FI}(q_t^\nu\mmid q_t^\mu)\,,
\end{align*}
via \eqref{eq:assump} and thus, by Gr\"onwall's inequality
\begin{align*}
    {\rm FI}(q_{t}^\nu\mmid q_t^\mu) &\leq \exp\Bigl(- 2\int_0^t (-2\theta_u+ {1}) \dd u \Bigr)\,{\rm FI}(q_0^\nu\mmid q_0^\mu) = \exp\Bigl(- 2\int_0^t (-2\theta_u+ {1}) \dd u \Bigr)\,{\rm FI}(\nu\mmid \mu)\,.
\end{align*}
Since the above holds for any time $t > 0$, we choose $T-t$, and thus
\begin{align*}
    \sqrt{{\rm FI}(q_{T-t}^\nu\mmid q_{T-t}^\mu)} \leq e^{-(T-t)}\exp\Bigl( \int_0^{T-t} 2\theta_u \dd u \Bigr)\,{\rm FI}(\nu\mmid \mu)^{1/2} \eqqcolon {c_{T-t}}\sqrt{{\rm FI}(\nu\mmid \mu)}\,. 
\end{align*}
Using the fact that
\begin{align*}
    \partial_t (\E\|X_t-Y_t\|^2)^{1/2} = \frac{1}{2}\,\frac{\partial_t \E\|X_t-Y_t\|^2}{(\E\|X_t-Y_t\|^2)^{1/2}}\,,
\end{align*}
we obtain
\begin{align*}
\partial_t (\E\|X_t - Y_t\|^2)^{1/2} \leq {\theta_{T-t}}\,(\E\|X_t - Y_t\|^2)^{1/2} + c_{T-t}\sqrt{{\rm FI}(\nu\mmid\mu)}\,.
\end{align*}
Applying Gr\"onwall's inequality again gives
\begin{align*}
(\E\|X_t-Y_t\|^2)^{1/2}
&\leq {\exp\Bigl(\int_0^t \theta_{T-s}\dd s\Bigr)\, (\E\|X_0-Y_0\|^2)^{1/2}} \\
&\qquad + \sqrt{{\rm FI}(\nu\mmid\mu)}\int_0^t c_{T-s}\exp\Bigl(\int_s^t {\theta_{T-r}} \dd r\Bigr) \dd s\,.
\end{align*}
{Let $Z \sim \gamma$ and $X_0 \defeq e^{-T} \bar X + \sqrt{1-e^{-2T}}\, Z$, $Y_0 \defeq e^{-T} \bar Y + \sqrt{1-e^{-2T}}\,Z$, where $\bar X \sim \mu$, $\bar Y\sim \nu$ are coupled so that $\|\bar X - \bar Y\|_{L^2} = W_2(\mu,\nu)$.
This produces a valid coupling $X_0\sim q_T^\mu$, $Y_0 \sim q_T^\nu$.
Moreover, $X_0 \to Z$ almost surely as $T\to\infty$, and similarly $Y_0 \to Z$.
By the assumption that the Kim--Milman maps are well-defined, it holds that $T_{\rm KM}^{\mu,T}(X_0) \to T_{\rm KM}^\mu(Z)$ and $T_{\rm KM}^{\nu,T}(Y_0) \to T_{\rm KM}^\nu(Z)$.
By Fatou's lemma and the above bound,
\begin{align*}
    \|T_{\rm KM}^\mu(Z) - T_{\rm KM}^\nu(Z)\|_{L^2}
    &\le \liminf_{T\to\infty} \|T_{\rm KM}^{\mu,T}(X_0) - T_{\rm KM}^{\nu,T}(Y_0)\|_{L^2} \\
    &\le \liminf_{T\to\infty}\Bigl[\exp\Bigl(\int_0^T (\theta_t -1)\dd t\Bigr)\,W_2(\mu,\nu) \\
    &\qquad{} + \sqrt{{\rm FI}(\nu\mmid\mu)}\int_0^T c_{T-s}\exp\Bigl(\int_s^T {\theta_{T-r}} \dd r\Bigr) \dd s\Bigr]\,.
\end{align*}
Now, we argue that the first term vanishes.
We can safely assume $\Lambda_\infty < \infty$, or else the statement we wish to prove is trivial.
Then, $\Lambda_\infty \ge \int_0^\infty \exp(3\int_0^T (\theta_u-1)\dd u)\dd T$ which implies that $\exp(\int_0^T (\theta_u-1)\dd u) \to 0$ as $T\to\infty$ along a subsequence, as desired.
}

{Now taking $t=T$ and writing out $c_{T-s}$, our proof concludes by taking a limit}
\begin{align*}
   {\frac{\|T_{\rm KM}^\mu - T_{\rm KM}^\nu\|_{L^2(\gamma)}}{\sqrt{{\rm FI}(\nu\mmid\mu)}}} &\leq {\limsup_{T\to\infty}} \int_0^T e^{-(T-s)}\exp\Bigl( \int_0^{T-s} 2\theta_u \dd u \Bigr) \exp\Bigl(\int_s^T {\theta_{T-r}} \dd r\Bigr) \dd s \\
   &= {\limsup_{T\to\infty}} \int_0^T e^{-(T-s)}\exp\Bigl(3 \int_0^{T-s} \theta_u \dd u \Bigr) \dd s \\
   &= {\limsup_{T\to\infty}\int_0^T \exp\Bigl( \int_0^s (3\theta_u-1) \dd u \Bigr) \dd s\,,}
\end{align*}
where we apply a change of variables.
\end{proof}
\subsubsection{Strong log-concavity with log-Lipschitz perturbations}\label{sec:slc_pert}
We {first} suppose our main target measure is of the form $\mu\propto\exp(-V+H)$ where $V$ is $\alpha$-strongly convex and $H$ is a (smooth) $L$-Lipschitz perturbation. In this setting, a recent result of \cite{BriPed25HeatFlow} showed that
\begin{align}\label{eq:bartheta_bp24}
    \theta_{u} = \frac{1-\alpha}{\alpha\,(e^{2u} - 1) + 1} + \frac{e^{2u}L^2}{(\alpha\,(e^{2u}-1) + 1)^2} + \frac{2L e^{2u}}{(\alpha\,(e^{2u}-1) + 1)^{3/2}\sqrt{e^{2u}-1}}\,.
\end{align}
Carrying out the algebra, the complete stability bound in this setting is given below. 
\begin{corollary}[Perturbation of strongly log-concave]\label{cor:pert}
Suppose $\mu \propto\exp(-V{+}H)$ where $V$ is $\alpha$-strongly convex and $H$ is an $L$-Lipschitz function. {Then \eqref{eq:main_result_equation} holds with 
\begin{align*}
    \Lambda_\infty = \frac{1}{\alpha}\exp\Bigl(\frac{3L^2}{2\alpha} + \frac{6L}{\sqrt{\alpha}}\Bigr)\,.
\end{align*}
If $\mu$ is only $\alpha$-strongly log-concave (i.e., $H=0$), then $\Lambda_\infty = \alpha^{-1}$.}
\end{corollary}
{\begin{remark}
One can view Corollary~\ref{cor:pert} as a strengthening of the classical transport-information inequality. For instance, an application of the HWI inequality (see \cite{gentil2020entropic}) for $\mu$ which is $\alpha$-strongly log-concave yields
\begin{align}\label{eq:hwi_app}
    W_2^2(\mu,\nu) \leq \alpha^{-2}\,{\rm FI}(\nu\mmid\mu)\,,
\end{align}
where $W_2^2(\mu,\nu)$ denotes the squared $2$-Wasserstein distance between $\mu$ and $\nu$ {\cite{otto2000generalization}}. However, it follows from a trivial coupling argument and Corollary~\ref{cor:pert} that
\begin{align*}
    W_2^2(\mu,\nu) \leq \| T_{\rm{KM}}^\mu - T_{\rm{KM}}^\nu\|^2_{L^2(\gamma)} \leq \alpha^{-2}\,{\rm FI}(\nu\mmid\mu)\,.
\end{align*}
Thus, we have strengthened~\eqref{eq:hwi_app} by giving an explicit coupling, and by replacing the $W_2$ metric on the left-hand side with a larger quantity (in fact, the ``linearized'' Wasserstein metric at $\gamma$ between $T_{\rm{KM}}^\mu$ and $T_{\rm{KM}}^\nu$).
\end{remark}}

\begin{proof}[Proof of Corollary~\ref{cor:pert}]
Letting $b = \exp(2{s}) - 1$, carrying out the integration yields
\begin{align*}
    \int_0^{s}\theta_u \dd u = -\frac{1}{2}\log\Bigl(\frac{1+\alpha b}{1+b}\Bigr) + \frac{b L^2}{2\,(1+\alpha b)} + \frac{2 b L}{1+\alpha b}\sqrt{\alpha + b^{-1}}\,.
\end{align*}
Another change of variables with $r = (b+1)^{-1/2} = \exp(-{s})$ yields, for the full integral,
\begin{align*}
 &\int_0^{T} \exp(-{s}) \exp\Bigl(3 \int_0^{{s}} \theta_u \dd u\Bigr)\dd s \\
 &\qquad \qquad =\int_{\exp(-T)}^1 (r^{2} + \alpha\,(1-r^{2}))^{-3/2}\exp\Bigl(\frac{3L^2}{2\,(f(r) + \alpha)}\Bigr)\exp\Bigl(\frac{6L}{\sqrt{f(r) + \alpha}}\Bigr) \dd r\,,
\end{align*}
where $f(r) = r^2/(1-r^2)$. As $f$ is increasing on the interval $(0,1)$ and $f(0) = 0$, we can replace $f(r)  +\alpha \geq \alpha$, and obtain
\begin{align*}
    {\Lambda_T} &\leq \exp\Bigl(\frac{3L^2}{2\alpha} + \frac{6L}{\sqrt{\alpha}}\Bigr) I(T,\alpha) \defeq \exp\Bigl(\frac{3L^2}{2\alpha} + \frac{6L}{\sqrt{\alpha}}\Bigr)  \int_{\exp(-T)}^{1} (r^2 + \alpha\,(1-r^2))^{-3/2} \dd r\,.
\end{align*}
Performing the integration in closed form, it is easy to see that $\lim_{T\to\infty} I(T,\alpha) = \alpha^{-1}$. This concludes the proof.
\end{proof}

\textbf{Example: Gaussian mixtures as tilts.} As an example, we take the case of Gaussian mixtures. Suppose $\mu = \sum_{k=1}^Kw_k\phi(\cdot;m_k,\Sigma)$ where $\phi(\cdot;m_k,\Sigma)$ is the Gaussian density with mean $m_k \in \R^d$ and covariance $\Sigma \succ 0$, and $w_k \geq 0$ are weights (such that $\sum_{k=1}^K w_k = 1$). In this case, it is possible to write down the log-density in the form of our assumptions, with\looseness-1
\begin{align*}
    V(x) = \tfrac{1}{2}\|x\|_{\Sigma^{-1}}^2\,, \quad H(x) = \log\sum_{k=1}^K w_k\exp\bigl(m_k^\top \Sigma^{-1}x - \tfrac12 m_k^\top \Sigma^{-1} m_k \bigr)\,,
\end{align*}
and, moreover, it is easy to verify that
\begin{align*}
    \nabla H(x) = \Sigma^{-1} \sum_{k=1}^K {w}_k(x)\, m_k \defeq \Sigma^{-1} \frac{\sum_{k=1}^K w_k m_k \exp\bigl(m_k^\top \Sigma^{-1}x - \tfrac12m_k^\top \Sigma^{-1}m_k\bigr)}{\sum_{k=1}^K w_k \exp\bigl(m_k^\top \Sigma^{-1}x - \tfrac12m_k^\top \Sigma^{-1}m_k\bigr)}\,.
\end{align*}
If we further assume $\alpha I \preceq \Sigma^{-1} \preceq \beta I$, then via Jensen's inequality,
\begin{align*}
    \|\nabla H(x)\| \leq \|\Sigma^{-1}\|_{\rm op} \max_k \|m_k\| \leq \beta \max_k \|m_k\|\,.
\end{align*}
Thus, for any $\nu$, our stability bound reads
\begin{align*}
\| T_{\rm{KM}}^\mu - T_{\rm{KM}}^\nu\|_{L^2(\gamma)} &\leq \frac{1}{\alpha}\exp\Bigl(O\Bigl(\frac{\beta^2 \max_{k}\|m_k\|^2}{\alpha}\Bigr)\Bigr)\,\sqrt{{\rm FI}(\nu\mmid \mu)}\,.
\end{align*}
\subsubsection{Distributions with asymptotically positive convex profiles}\label{sec:asymp_convex}
As a final example, we turn to a family of distributions  {whose origins date back to \cite{eberle2016reflection,lindvall1986coupling}}. To define said family, we require the following definitions. For a function $f:\R^d \to \R$, we define the integrated convexity profile of $f$, denoted $\kappa_f :\R_+ \to \R\cup\{-\infty\}$, to be 
\begin{align*}
    \kappa_f(r) \defeq \inf\Bigl\{ \frac{\langle \nabla {f}(x) - \nabla {f}(y), x-y \rangle}{\|x-y\|^2} \ : \ \|x-y\| = r\Bigr\}\,.
\end{align*}
We {motivate} this definition with the following example.

\textbf{Example: Strongly convex potential outside a ball.}
Taking $\mu\propto\exp(-V)$, suppose that there exists $\alpha_V > 0 $ and $R_V,L_V \geq 0$ such that
\begin{align}\label{eq:asymp_1}
    \kappa_V(r) \geq 
    \begin{cases}
    \alpha_V & {\text{ for } } r > R_V\,, \\
    \alpha_V - L_V & {\text{ for }  }r \leq R_V\,.
    \end{cases}
\end{align}
If $R_V = 0$, then $\kappa_V(r) \geq \alpha_V > 0$ is equivalent to strong convexity over all of $\R^d$. Otherwise for $R_V > 0$, the potential has integrated convexity profile which might be negative inside $B(0,R_V)$, while remaining strongly convex outside the ball.

\medskip{}

The following proposition gives an alternative characterization of \eqref{eq:asymp_1}. 
\begin{proposition}[{\citep[Proposition 5.1]{conforti2024weak}}]\label{prop:conforti_prop}
Suppose $V$ satisfies \eqref{eq:asymp_1} with constants $\alpha_V > 0$ and $L_V, R_V \geq 0$. Then it satisfies, for all $r > 0$,
\begin{align*}
    \kappa_V(r) \geq \alpha_V - r^{-1}\hat g_{\hat L}(r)\,, 
\end{align*}
where $\hat g_L(r) \deq 2\sqrt{L}\tanh(r\sqrt{L})$, and $\hat L$ is given by
\begin{align*}
    \hat L \defeq \begin{cases}
        \inf\{ L\ge 0 : R_V^{-1}\hat g_L(R_V) \geq L_V\}\,, & R_V > 0\,, \\
        0\,, & R_V = 0\,.
    \end{cases}
\end{align*}
\end{proposition}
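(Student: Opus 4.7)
The plan is to verify the claimed lower bound on $\kappa_V$ separately on the two regions $r\le R_V$ and $r>R_V$, after first establishing that $r\mapsto r^{-1}\hat g_L(r)$ is non-increasing in $r$ for each fixed $L\ge 0$. The point of the definition is that $\hat L$ is calibrated so that the curve $r \mapsto \alpha_V - r^{-1}\hat g_{\hat L}(r)$ just meets the piecewise-constant lower bound $\min\{\alpha_V,\,\alpha_V-L_V\}$ at $r=R_V$, with slack elsewhere coming from monotonicity.

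First I would establish the key monotonicity. Writing $r^{-1}\hat g_L(r) = 2\sqrt L\,\tanh(r\sqrt L)/r$ and substituting $u = r\sqrt L$, the claim reduces to showing that $u\mapsto \tanh(u)/u$ is non-increasing on $(0,\infty)$. A direct differentiation shows that the sign of the derivative matches that of $h(u) \defeq u\,\mathrm{sech}^2(u) - \tanh(u)$; since $h(0)=0$ and $h'(u) = -2u\,\mathrm{sech}^2(u)\tanh(u)\le 0$ for $u\ge 0$, one gets $h\le 0$ on $(0,\infty)$. Separately, for fixed $r>0$ the map $L\mapsto r^{-1}\hat g_L(r)$ is continuous and strictly increasing from $0$ (at $L=0$) to $+\infty$ (as $L\to\infty$); consequently the infimum in the definition of $\hat L$ is attained with equality, i.e.\ $R_V^{-1}\hat g_{\hat L}(R_V)=L_V$ whenever $R_V>0$.

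Next I would dispatch the two cases. For $r>R_V$, since $\hat g_{\hat L}(r)\ge 0$, we have $\alpha_V - r^{-1}\hat g_{\hat L}(r)\le \alpha_V\le \kappa_V(r)$ by the strong-convexity assumption outside the ball. For $r\le R_V$, monotonicity in $r$ gives $r^{-1}\hat g_{\hat L}(r)\ge R_V^{-1}\hat g_{\hat L}(R_V)=L_V$, and hence $\alpha_V - r^{-1}\hat g_{\hat L}(r)\le \alpha_V - L_V \le \kappa_V(r)$ by the inside-ball assumption. The degenerate case $R_V=0$ is immediate: by definition $\hat L=0$, which forces $\hat g_{\hat L}\equiv 0$, and the bound reduces to $\kappa_V(r)\ge \alpha_V$.

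The only nontrivial ingredient is the monotonicity of $\tanh(u)/u$; everything else is direct bookkeeping against the piecewise assumption on $\kappa_V$. I do not anticipate any real obstacle, since the inequality is essentially built into the definition of $\hat L$ through the equality $R_V^{-1}\hat g_{\hat L}(R_V) = L_V$.
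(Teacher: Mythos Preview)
The paper does not actually prove this proposition; it is quoted verbatim from \cite[Proposition 5.1]{conforti2024weak} and used as a black box, so there is no ``paper's own proof'' to compare against.

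That said, your argument is correct and complete. The two ingredients you identify are exactly what is needed: (i) the map $u\mapsto \tanh(u)/u$ is non-increasing on $(0,\infty)$, which you verify cleanly via $h(u)=u\,\mathrm{sech}^2(u)-\tanh(u)$; and (ii) for fixed $r>0$, $L\mapsto r^{-1}\hat g_L(r)$ is continuous, strictly increasing, and ranges over $[0,\infty)$, so the infimum defining $\hat L$ is attained with equality $R_V^{-1}\hat g_{\hat L}(R_V)=L_V$ whenever $R_V,L_V>0$. With these in hand, the casework on $r\le R_V$ versus $r>R_V$ is immediate, and the degenerate cases $R_V=0$ or $L_V=0$ fall out trivially since then $\hat L=0$ and $\hat g_0\equiv 0$. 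There are no gaps.
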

Following Proposition~\ref{prop:conforti_prop}, it is worth considering some asymptotic scenarios for determining $\hat L$. For instance, if $R_V^2L_V \ll 1$, then one can verify that $\hat L \approx L_V/2$. On the other hand, if $R_V^2L_V \gg 1$, then $\hat{L} \approx L_V^2R_V^2/4$. 

To generalize this characterization, we can consider functions $g \in \cG \subset C^2((0,\infty),\R_+)$ if they satisfy the following properties:
\begin{enumerate}
    \item $r\mapsto r^{1/2}g(r^{1/2})$ is non-decreasing and concave,
    \item $\lim_{r\downarrow 0} rg(r) = 0$,
    \item $g$ itself is bounded such that $g' \geq 0$ and $2 g'' + g g' \leq 0$,
    \item the right-derivative of ${g}$ at the origin exists, denoted ${g'(0)}$.
\end{enumerate}
This function class leads to our final corollary {of this section}.

\begin{corollary}[Asymptotically positive convex profile]\label{cor:asymp_conv} 
Let ${\alpha} > -1$ and $\hat{g} \in \cG$, and $\mu \propto \gamma \exp(-h)$ where the integrated convexity profile of $h$ satisfies $\kappa_h(r) \geq \alpha - r^{-1}\hat{g}(r)$. Then,\looseness-1
{\begin{align*}
    \Lambda_\infty = \frac{1}{1+\alpha}\exp\Bigl(\frac{3\hat g'(0)}{2\,(1+\alpha)}\Bigr)\,.
\end{align*}}
\end{corollary}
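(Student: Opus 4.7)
The plan is to reduce Corollary~\ref{cor:asymp_conv} to Proposition~\ref{prop:main} by establishing an upper bound on $\theta_s$, and then evaluate the resulting $\Lambda_\infty$ by computations that directly parallel Corollary~\ref{cor:pert}. Since $\mu\propto\gamma\exp(-h)$, the potential $-\log\mu$ has convexity profile at least $1+\alpha - r^{-1}\hat g(r)$, so $\mu$ behaves as a $(1+\alpha)$-strongly log-concave measure up to a defect governed by $\hat g$. The first step is to invoke the weak semiconvexity framework of Conforti~\cite{conforti2024weak,conforti2025projected} to track how this defect evolves under the OU semigroup. The axioms defining $\cG$---monotonicity and concavity of $r\mapsto r^{1/2}g(r^{1/2})$, $\lim_{r\downarrow 0}rg(r)=0$, and the differential inequality $2g''+gg'\leq 0$---are precisely the conditions under which a convexity profile of the form $\alpha - r^{-1}\hat g(r)$ propagates contractively along the OU flow, with the residual defect at time $s$ controlled by $\hat g'(0)$. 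From this machinery one expects a Hessian bound of the form
\begin{align*}
\theta_s \leq \frac{-\alpha}{(1+\alpha)\,(e^{2s}-1) + 1} + \frac{e^{2s}\,\hat g'(0)}{((1+\alpha)\,(e^{2s}-1) + 1)^2}\,,
\end{align*}
which degenerates to the strongly log-concave estimate (with parameter $1+\alpha$) when $\hat g\equiv 0$, and agrees with the initial bound $\theta_0 \leq \hat g'(0) - \alpha$ coming from $\kappa_h(0^+) \geq \alpha - \hat g'(0)$.

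With $\alpha^* \defeq 1+\alpha$, both summands of $\theta_u$ integrate in closed form (the second via the substitution $v = \alpha^*(e^{2u}-1)+1$), so the change of variables $r = e^{-(T-s)}$ from the proof of Corollary~\ref{cor:pert}, followed by $T\to\infty$, yields
\begin{align*}
\Lambda_\infty = \int_0^1 (\alpha^*(1-r^2)+r^2)^{-3/2}\exp\Bigl(\frac{3\hat g'(0)\,(1-r^2)}{2\,(\alpha^*(1-r^2)+r^2)}\Bigr)\dd r\,.
\end{align*}
The exponent can be rewritten as $\tfrac{3\hat g'(0)}{2}\cdot\tfrac{x}{1+(\alpha^*-1)\,x}$ with $x = 1-r^2$, and since $\alpha^*>0$ this is monotone in $x$ with maximum $\tfrac{3\hat g'(0)}{2\alpha^*}$ attained at $x=1$. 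Pulling the exponential out of the integral and invoking the identity $\int_0^1 (\alpha^*(1-r^2)+r^2)^{-3/2}\dd r = 1/\alpha^*$, already used in the proof of Corollary~\ref{cor:pert}, then gives $\Lambda_\infty \leq (1+\alpha)^{-1}\exp\bigl(3\hat g'(0)/(2\,(1+\alpha))\bigr)$, which is the stated bound.

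The main obstacle is Step 1: pinning down the precise form of $\theta_s$ above from the weak semiconvexity results of \cite{conforti2024weak}. Once this Hessian estimate on $\log Q_s[\mu/\gamma]$ is in hand, Step 2 is a routine calculation parallel to Corollaries~\ref{cor:stronglyconvex} and~\ref{cor:pert}.
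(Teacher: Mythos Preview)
Your proposal is correct and follows essentially the same route as the paper: the paper cites \cite[Lemma~5.9]{conforti2025projected} to obtain exactly the expression for $\theta_u$ that you conjecture (your two-term form is algebraically identical to the paper's), then performs the same change of variables $r=e^{-(T-s)}$, bounds the exponential by its value at $r=0$, and evaluates the remaining integral as $1/(1+\alpha)$. The only gap in your write-up is the precise citation for the Hessian estimate, which is the lemma just mentioned.
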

\begin{proof}
By \cite[Lemma 5.9]{conforti2025projected}, in this case it holds that
\begin{align*}
    \theta_u = -\frac{\exp(-2u)}{1 + (1-\exp(-2u))\,\alpha}\Bigl(\alpha - \frac{\hat{g}'(0)}{1 + (1-\exp(-2u))\,\alpha}\Bigr)\,.
\end{align*}
Mimicking the computations from before, we then compute 
\begin{align*}
 {\Lambda_T} &\leq \exp\Bigl(-\frac{3\hat g'(0)}{2\alpha}\bigl((1+\alpha)^{-1} - 1)\bigr)\Bigr)\int_{\exp(-T)}^1 (1 + \alpha -\alpha r^{2})^{-3/2} \dd r\,,
\end{align*}
where we obtain the result by taking the $T\to\infty$ limit.
\end{proof}

Returning to our example of log-densities which are strongly convex outside a ball, we can instantiate Corollary~\ref{cor:asymp_conv} with the heuristic bounds on $\hat L$ to obtain the bounds
\begin{align*}
    \| T_{\rm{KM}}^\mu - T_{\rm{KM}}^\nu\|_{L^2(\gamma)} \leq 
    {\alpha_V}^{-1} \exp\bigl(O\bigl((L_V/{\alpha_V})\,(1 \vee L_V R_V^2)\bigr)\bigr)\,\sqrt{{\rm FI}(\nu \mmid \mu)}\,.
\end{align*}
\section{Extension to stronger metrics}
In this section, we show how our proof above can be modified to provide stability bounds under stronger metrics. Writing $\infFI(\nu\mmid\mu) \defeq {\|\nabla\log(\nu/\mu)\|_{L^\infty(\nu)}^2}$, our goal now is to establish bounds of the form
\begin{align}\label{eq:esssup_result}
    \|T^\mu_{\rm KM} - T^\nu_{\rm KM}\|_{L^\infty(\gamma)} \lesssim \sqrt{\infFI(\nu\mmid\mu)}\,,
\end{align}
where again the hidden constants will be made explicit.

To start, we follow the start of the proof of Theorem~\ref{thm:l2_result} assuming \eqref{eq:assump}. It is easy to see that $\gamma$-almost surely, it holds that
\begin{align*}
    \partial_t \|X_t - Y_t\|^2 \leq 2\theta_{T - t}\,\|X_t - Y_t\|^2 + 2\,\|X_t - Y_t\|\infFI(q_{T-t}^\nu\mmid q_{T-t}^\mu)^{1/2}\,.
\end{align*}
Instead of relying on Lemma~\ref{lem:andre} (for which an analogue for $\infFI$ is out of scope), suppose instead that for some constant $d_{T-t} \geq 0$
\begin{align}\label{eq:infFI_ineq}
    \sqrt{\infFI(q_{T-t}^\nu\mmid q_{T-t}^\mu)} \leq d_{T-t}\sqrt{\infFI(\nu\mmid \mu)}\,.
\end{align}
{We see that the rest of the argument is carried out as before, replacing $\Lambda_T$ with}
\begin{align}\label{eq:esssup_proof}
    {\eta_T \defeq} \int_0^T d_{s}\exp\Bigl(\int_0^{{s}}\theta_u \dd u \Bigr)\dd s\,.
\end{align}
It remains to show that \eqref{eq:infFI_ineq} holds, after which taking limits in~\eqref{eq:esssup_proof} would yield our desired result.

To this end, {for a measure $\mu$,} $y \in \R^d$ and $t \geq 0$, we write
\begin{align*}
    \mu_{y,t}(x) \propto q_t(y\mid x)\,\mu(x)\,,\qquad q_t(\cdot\mid x) = \cN(e^{-t}x, (1-e^{-2t})I)\,.
\end{align*}
Finally, we recall that $\mu$ satisfies a log-Sobolev inequality with constant $\lambda>0$ if
\begin{align*}
    {\rm KL}(\nu\mmid\mu) \defeq \int {\log \Bigl(\frac{\nu}{\mu}\Bigr)\dd\nu} \leq \frac{\lambda}{2}\,{\rm FI}(\nu\mmid\mu)\,.
\end{align*}
We will now prove the following.

\begin{lemma}\label{lem:stronger_lemma}
Suppose that $\mu$ is such that for all $y \in \R^d$ and $t \geq 0$, $\mu_{y,t}$ satisfies a log-Sobolev inequality with constant $\lambda_t$. Then, for all $s \geq 0$,
\begin{align*}
    \sqrt{\infFI(q^\nu_s\mmid q^\mu_s)} \leq \frac{e^{s}\lambda_s}{e^{2s}-1} \sqrt{\infFI(\nu \mmid \mu)}\,.
\end{align*}
\end{lemma}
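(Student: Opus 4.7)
The plan is to derive a Tweedie-style identity that represents $\nabla\log(q^\nu_s/q^\mu_s)(y)$ as a difference of posterior means, and then to control that difference by the log-Sobolev assumption on $\mu_{y,s}$ together with Otto--Villani. Throughout, let $\nu_{y,s}(x) \propto q_s(y\mid x)\,\nu(x)$ denote the analogous posterior for $\nu$. I will show a pointwise bound in $y$, after which taking the essential supremum over $q_s^\nu$ gives the $L^\infty$ statement.

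First, I would exploit the Bayes identity $q_s^\nu(y)/q_s^\mu(y) = \E_{\mu_{y,s}}[\nu/\mu]$, differentiate in $y$ using $\nabla_y \log q_s(y\mid x) = -(y - e^{-s} x)/(1-e^{-2s})$, and note that the $y$-dependent piece cancels upon forming the covariance in $x$. Since the proportionality $\nu_{y,s}/\mu_{y,s} \propto \nu/\mu$ lets one reweight to the $\nu$-posterior, this computation yields the clean identity
$$\nabla \log \frac{q_s^\nu}{q_s^\mu}(y) \;=\; \frac{e^{-s}}{1-e^{-2s}}\,\bigl(\E_{\nu_{y,s}}[X] - \E_{\mu_{y,s}}[X]\bigr).$$
Next, I would bound the norm of this mean discrepancy by a Wasserstein distance: testing against linear $1$-Lipschitz functions $x \mapsto \langle v, x\rangle$ via Kantorovich duality gives
$$\bigl\| \E_{\nu_{y,s}}[X] - \E_{\mu_{y,s}}[X] \bigr\| \;\leq\; W_1(\nu_{y,s}, \mu_{y,s}) \;\leq\; W_2(\nu_{y,s}, \mu_{y,s}).$$

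Now the LSI assumption enters. By the Otto--Villani theorem, LSI with constant $\lambda_s$ for $\mu_{y,s}$ implies Talagrand's $T_2(\lambda_s)$ inequality $W_2^2(\,\cdot\,, \mu_{y,s}) \leq 2\lambda_s\,\mathrm{KL}(\,\cdot\,\mmid \mu_{y,s})$; chaining with LSI itself then gives
$$W_2(\nu_{y,s}, \mu_{y,s}) \;\leq\; \lambda_s\,\sqrt{\mathrm{FI}(\nu_{y,s}\mmid \mu_{y,s})}.$$
The last step is to pass from the posterior Fisher information to the prior one. Since $\nu_{y,s}/\mu_{y,s}$ is proportional to $\nu/\mu$ with a $y$-dependent constant, the log-derivative in $x$ is unchanged: $\nabla \log(\nu_{y,s}/\mu_{y,s}) = \nabla\log(\nu/\mu)$. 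Together with $\nu_{y,s} \ll \nu$, which is immediate from reweighting, this yields
$$\mathrm{FI}(\nu_{y,s}\mmid \mu_{y,s}) = \int \|\nabla \log(\nu/\mu)\|^2 \dd \nu_{y,s} \;\leq\; \infFI(\nu\mmid\mu).$$

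Collecting bounds and rewriting $e^{-s}/(1-e^{-2s}) = e^{s}/(e^{2s}-1)$ produces the claimed inequality pointwise in $y$, and taking the essential supremum under $q_s^\nu$ finishes the proof. I do not anticipate a serious obstacle: the argument is essentially a concatenation of standard functional-analytic moves. The only small subtleties are recognising the Tweedie identity that converts score differences into posterior-mean gaps, and spotting that the posterior log-density ratio has exactly the same gradient as the original ratio, which is precisely what makes the $\infFI$ of the originals control the Fisher information of the posteriors.
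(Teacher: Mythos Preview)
Your proposal is correct and follows essentially the same route as the paper: the Tweedie-type identity expressing the score difference as a gap of posterior means, the bound by $W_2$, the transport--information inequality from LSI, and the observation that $\nabla\log(\nu_{y,s}/\mu_{y,s})=\nabla\log(\nu/\mu)$ so that the posterior Fisher information is dominated by $\infFI(\nu\mmid\mu)$. The only cosmetic difference is that you factor the transport--information step as Otto--Villani $T_2$ followed by LSI, whereas the paper invokes $W_2^2\le\lambda_s^2\,\mathrm{FI}$ directly; the resulting bound is identical.
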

\begin{proof}
Recalling that $\rho Q_t(y) = \int q_t(y\mid x)\dd\rho(x)$, it is easy to see that 
\begin{align*}
    \nabla \log \frac{\nu Q_s}{\mu Q_{s}}(y) = \frac{e^{-s}}{1-e^{-2s}} \bigl( \E_{\nu_{y,s}}[X] - \E_{\mu_{y,s}}[X]\bigr)\,,
\end{align*}
and thus in norm
\begin{align*}
    \Bigl\|\nabla \log \frac{\nu Q_s}{\mu Q_s}\Bigr\|_{L^\infty(\nu Q_s)} = \frac{e^{-s}}{1-e^{-2s}} \sup_{y \in \R^d}{\Bigl\lVert\int x \dd( \nu_{y,s} - \mu_{y,s})(x)\Bigr\rVert} \,.
\end{align*}
We can further bound the right-hand side as
\begin{align*}
    \Bigl\|\int x \dd( \nu_{y,s} - \mu_{y,s})(x)\Bigr\| \leq  W_2(\nu_{y,s},\mu_{y,s})\,.
\end{align*}
If $\mu_{y,s}$ satisfies a log-Sobolev inequality with constant $\lambda_s$, it also satisfies the following transport-information inequality:
\begin{align*}
    W_2^2(\nu_{y,s},\mu_{y,s}) \leq \lambda_s^2\, {\rm FI}(\nu_{y,s}\mmid\mu_{y,s})\,.
\end{align*}
(This is a generalization of \eqref{eq:hwi_app}; see \cite{gentil2020entropic}.) By the definitions of $\mu_{y,s}$ and $\nu_{y,s}$, 
\begin{align*}
    \nabla \log \frac{\nu_{y,s}}{\mu_{y,s}} = \nabla \log \frac{\nu}{\mu}\,.
\end{align*}
As $\sup_{y\in\R^d} {\rm FI}(\nu_{y,s}\mmid \mu_{y,s}) \leq \infFI(\nu\mmid\mu)$, our proof is complete.
\end{proof}

We now instantiate Lemma~\ref{lem:stronger_lemma} for our existing examples to obtain contraction estimates in $\infFI$.
\begin{proposition}\label{prop:stronger_prop}
For any $s \geq 0$, write $u(s) = e^{2s}-1$. If $\mu$ satisfies the conditions of
\begin{enumerate}
    \item[(a)] Corollary~\ref{cor:pert}, then Lemma~\ref{lem:stronger_lemma} holds with 
    \begin{align}\label{eq:lsi_pert}
            \lambda_{s} = (\alpha + 1/u(s))^{-1}\exp\Bigl(\frac{L^2}{\alpha+1/u(s)} + \frac{4L}{(\alpha+1/u(s))^{1/2}}\Bigr)\,;
    \end{align}
    \item[(b)] Corollary~\ref{cor:asymp_conv}, then Lemma~\ref{lem:stronger_lemma} holds with 
\begin{align}\label{eq:lsi_asymp}
    \lambda_s = (1 + \alpha + 1/u(s))^{-1}\exp\Bigl(\frac{\hat g'(0)}{1 + \alpha + 1/u(s)}\Bigr)\,.
\end{align}
\end{enumerate}
\end{proposition}
\begin{proof}
Suppose $\mu$ satisfies the assumptions of Corollary~\ref{cor:pert}, i.e., $\mu\propto\exp(-{V+H})$ where $V$ is $\alpha$-strongly convex and $H$ is $L$-Lipschitz. Then it is easy to see that for any $y \in \R^d$ and $s \geq 0$ that $\mu_{y,s}$ 
is strongly log-concave with parameter $\alpha + 1/u(s)$ and the log-perturbation $H$ remains unchanged. Thus by \cite[Theorem 1.4]{BriPed25HeatFlow}, $\mu_{y,s}$ satisfies the log-Sobolev inequality with parameter given by \eqref{eq:lsi_pert}. 

The argument for the second case is identical---the tilted measure is more strongly log-concave everywhere. In this case, the log-Sobolev constant is given by \eqref{eq:lsi_asymp}; see \cite[Theorem 5.7]{conforti2025projected}.\qedhere
\end{proof}

Combined with the computations surrounding \eqref{eq:infFI_ineq} and \eqref{eq:esssup_proof}, we now arrive at the following result.

\begin{theorem}\label{thm:inffi}
    Suppose $\mu$ is such that \eqref{eq:assump} holds for some constants $(\theta_s)_{s \geq 0}$ and that $\mu_{y,t}$ satisfies a log-Sobolev inequality with constant ${\lambda_t}$ for any $y \in \R^d$ and $t \geq 0$. Then $\gamma$-a.s.,
    \begin{align*}
        \|T^\mu_{\rm KM} - T^\nu_{\rm KM}\|_{L^\infty(\gamma)} {\leq \eta_\infty\sqrt{\infFI(\nu\mmid\mu)} \defeq \Bigl[\int_0^\infty \frac{e^T \lambda_T}{e^{2T}-1} \exp\Bigl(\int_0^T \theta_u\dd u\Bigr)\dd T\Bigr]} \sqrt{\infFI(\nu\mmid\mu)}\,.
    \end{align*}
\end{theorem}
{
\begin{remark}
To the best of our knowledge, Theorem~\ref{thm:inffi} establishes a new transport-information inequality of the form 
\begin{align*}
    W_\infty(\mu,\nu) \leq \|T^\mu_{\rm KM} - T_{\rm KM}^\nu\|_{L^\infty(\gamma)}  \leq \eta_\infty\sqrt{\infFI(\nu\mmid\mu)}\,,
\end{align*}
where we recall that $W_\infty(\mu,\nu) \defeq \inf_{\pi \in \Pi(\mu,\nu)} \esssup_{(X,Y)\sim\pi} \|X-Y\|$,  
where $\Pi(\mu,\nu)$ is the set of joint measures with first- and second-marginal given by $\mu$ and $\nu$ respectively.    
\end{remark}
We will now revisit our previous examples (log-Lipschitz perturbations and strongly log-concave outside a ball) in the context of Theorem~\ref{thm:inffi}, where we obtain the same constants as in Section~\ref{sec:main_results}.
}
\begin{corollary}\label{cor:pert_inf}
Suppose $\mu\propto\exp(-{V +H})$ where $V$ is $\alpha$-strongly convex and $H$ is $L$-Lipschitz. Then Theorem~\ref{thm:inffi} holds with constant
\begin{align*}
    \eta_\infty = \alpha^{-1}\exp\Bigl(\frac{3L^2}{2\alpha} + \frac{6L}{\sqrt{\alpha}}\Bigr)\,.
\end{align*}
\end{corollary}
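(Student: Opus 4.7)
The plan is to specialize Theorem~\ref{thm:inffi} to the assumptions of Corollary~\ref{cor:pert}, using \eqref{eq:bartheta_bp24} for the curvature bound $\theta_s$ and \eqref{eq:lsi_pert} for the log-Sobolev constant of $\mu_{y,\sigma}$. The key observation that makes the computation essentially free—no new integrals need to be evaluated—is that the integrand $d_\sigma \exp\bigl(\int_0^\sigma \theta_u \dd u\bigr)$ appearing in $\eta_\infty$ coincides \emph{identically} with the integrand $e^{-\sigma}\exp\bigl(3 \int_0^\sigma \theta_u \dd u\bigr)$ that already appeared inside the proof of Corollary~\ref{cor:pert}, so the final constant is literally the same.

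To execute this, I would set $b=b(\sigma) = e^{2\sigma}-1$ and first simplify $d_\sigma = e^\sigma \lambda_\sigma / u(\sigma)$ via \eqref{eq:lsi_pert}, using $u(\sigma)\,(\alpha + 1/u(\sigma)) = 1 + \alpha b$, to obtain
\begin{align*}
    d_\sigma = \frac{e^\sigma}{1+\alpha b}\exp\Bigl(\frac{L^2 b}{1+\alpha b} + \frac{4L\sqrt{b}}{\sqrt{1+\alpha b}}\Bigr)\,.
\end{align*}
Next I would reuse the closed form already derived inside the proof of Corollary~\ref{cor:pert},
\begin{align*}
    \exp\Bigl(\int_0^\sigma \theta_u \dd u\Bigr) = \frac{e^\sigma}{(1+\alpha b)^{1/2}}\exp\Bigl(\frac{L^2 b}{2(1+\alpha b)} + \frac{2L\sqrt{b}}{\sqrt{1+\alpha b}}\Bigr)\,,
\end{align*}
(using $\sqrt{1+b}=e^\sigma$ and $\frac{2bL}{1+\alpha b}\sqrt{\alpha+1/b} = \frac{2L\sqrt{b}}{\sqrt{1+\alpha b}}$). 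Multiplying these two displays and adding exponents gives
\begin{align*}
    d_\sigma \exp\Bigl(\int_0^\sigma \theta_u \dd u\Bigr) = \frac{e^{2\sigma}}{(1+\alpha b)^{3/2}}\exp\Bigl(\frac{3L^2 b}{2(1+\alpha b)} + \frac{6L\sqrt{b}}{\sqrt{1+\alpha b}}\Bigr)\,,
\end{align*}
and inspection recognizes the right-hand side as exactly $e^{-\sigma}\exp\bigl(3\int_0^\sigma \theta_u \dd u\bigr)$.

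With this identity in hand, $\eta_T$ equals the integral $\Lambda_T$ from Corollary~\ref{cor:pert}, so the bound $\eta_\infty \leq \alpha^{-1}\exp\bigl(\frac{3L^2}{2\alpha} + \frac{6L}{\sqrt{\alpha}}\bigr)$ follows verbatim from the change of variables $r = e^{-\sigma}$, the monotonicity argument that replaces $f(r)+\alpha$ by $\alpha$ in the exponent, and the closed-form evaluation $\lim_{T\to\infty}\int_{e^{-T}}^{1} (r^2 + \alpha\,(1-r^2))^{-3/2}\dd r = \alpha^{-1}$ already carried out there. The main obstacle is purely bookkeeping: one must track carefully that $\lambda$ in Theorem~\ref{thm:inffi} is the log-Sobolev constant of $\mu_{y,\sigma}$ indexed by the Ornstein--Uhlenbeck time $\sigma$ itself (as provided by \eqref{eq:lsi_pert}), after which everything reduces to algebra, and the ``miracle'' that $d_\sigma$ supplies exactly the extra factor needed to match the Corollary~\ref{cor:pert} integrand makes the $L^\infty$ constant agree with its $L^2$ counterpart.
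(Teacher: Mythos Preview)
Your proposal is correct and uses the same ingredients as the paper's proof (the curvature bound \eqref{eq:bartheta_bp24}, the log-Sobolev constant \eqref{eq:lsi_pert}, and the integral already evaluated inside the proof of Corollary~\ref{cor:pert}). Your packaging is in fact slightly cleaner: you notice the \emph{exact} identity
\[
d_\sigma\exp\Bigl(\int_0^\sigma \theta_u\dd u\Bigr)=e^{-\sigma}\exp\Bigl(3\int_0^\sigma \theta_u\dd u\Bigr)\,,
\]
so that $\eta_T=\Lambda_T$ before any estimation, and then quote Corollary~\ref{cor:pert} verbatim. The paper instead bounds $d_s$ and $\exp(\int_0^{T-s}\theta_u\dd u)$ in two separate steps---each time replacing $\alpha+1/u(s)$ by $\alpha$ in the exponential---and only then recombines, landing on the same residual integral $\int_{e^{-T}}^{1}(r^2+\alpha(1-r^2))^{-3/2}\dd r\to\alpha^{-1}$. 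Both routes give the identical constant; yours makes transparent \emph{why} the $L^\infty$ and $L^2$ constants coincide here.
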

\begin{proof}
By {Lemma~\ref{lem:stronger_lemma} and Proposition~\ref{prop:stronger_prop}}, we can compute $d_s$ and use the bound $\alpha + 1/u(s) \geq \alpha$ for all $s \geq 0$ to obtain
\begin{align*}
    d_s \leq \frac{e^s}{e^{2s}-1}\,\frac{1}{\alpha + (e^{2s}-1)^{-1}}\exp\Bigl(\frac{L^2}{\alpha} + \frac{4L}{\sqrt{\alpha}}\Bigr) = \frac{e^s}{\alpha\,(e^{2s} - 1) + 1}\exp\Bigl(\frac{L^2}{\alpha} + \frac{4L}{\sqrt{\alpha}}\Bigr) \,,
\end{align*}
which then leads to
\begin{align*}
    \eta_T \leq \exp\Bigl(\frac{L^2}{\alpha} + \frac{4L}{\sqrt{\alpha}}\Bigr) \int_0^T \frac{e^{{s}}}{\alpha\,(e^{2{s}}-1) + 1}\exp\Bigl(\int_0^{{s}}\theta_u \dd u\Bigr)\dd s\,. 
\end{align*}
We already computed the integral inside the exponential (in the proof of Corollary~\ref{cor:pert}). Dropping the same terms, we obtain the following upper bound
\begin{align*}
    \eta_T \leq \exp\Bigl(\frac{3L^2}{2\alpha} + \frac{6L}{\sqrt{\alpha}}\Bigr) \int_0^T \frac{e^{2{s}}}{(\alpha e^{2{s}} + (1-\alpha))^{3/2}} \dd s\,. 
\end{align*}
One can obtain our final result by evaluating the integral by elementary means, and taking the limit as $T\to\infty$.
\end{proof}

\begin{corollary}
Suppose $\mu\propto \gamma\exp(-h)$ where the potential satisfies the conditions in Corollary~\ref{cor:asymp_conv}. Then Theorem~\ref{thm:inffi} holds with constant
\begin{align*}
    \eta_\infty = (1+\alpha)^{-1}\exp\Bigl(\frac{3\hat g'(0)}{2(1+\alpha)}\Bigr)\,.
\end{align*}
\end{corollary}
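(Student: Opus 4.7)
The proof will mirror Corollary~\ref{cor:pert_inf}: apply Theorem~\ref{thm:inffi} using the log-Sobolev constant \eqref{eq:lsi_asymp}, extract a closed-form expression for $\exp\bigl(\int_0^{T-s}\theta_u\dd u\bigr)$ that is implicit in the proof of Corollary~\ref{cor:asymp_conv}, then bound two exponential factors and evaluate a simple integral.

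First, I plug \eqref{eq:lsi_asymp} into $d_s = e^s\lambda_s/u(s)$. Using the identity $u(s)\,(1+\alpha+1/u(s)) = (1+\alpha)\,(e^{2s}-1)+1$, I obtain
\begin{align*}
d_s = \frac{e^s}{(1+\alpha)\,(e^{2s}-1)+1}\exp\Bigl(\frac{\hat g'(0)}{1+\alpha+1/u(s)}\Bigr) \leq \frac{e^s\,\exp(\hat g'(0)/(1+\alpha))}{(1+\alpha)\,(e^{2s}-1)+1}\,,
\end{align*}
where the inequality replaces the second factor by its $s \to \infty$ value. Next, starting from the explicit formula for $\theta_u$ recalled in the proof of Corollary~\ref{cor:asymp_conv}, a direct integration (recognizing each term as a derivative of a simple function of $a(u) \defeq 1+(1-e^{-2u})\,\alpha$) gives
\begin{align*}
\exp\Bigl(\int_0^{T-s}\theta_u\dd u\Bigr) = \frac{1}{\sqrt{a(T-s)}}\exp\Bigl(-\frac{\hat g'(0)}{2\alpha}\,\bigl(\tfrac{1}{a(T-s)}-1\bigr)\Bigr)\,.
\end{align*}

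With the substitution $r = e^{-(T-s)}$, so that $a(T-s) = 1+\alpha-\alpha r^2$ and $\dd s = \dd r/r$, the prefactor of $d_{T-s}$ reduces to $\tfrac{r}{1+\alpha-\alpha r^2}$ and the full integrand appearing in the $\eta_T$ formula of Theorem~\ref{thm:inffi} collapses to
\begin{align*}
\frac{1}{(1+\alpha-\alpha r^2)^{3/2}}\exp\Bigl(-\frac{\hat g'(0)}{2\alpha}\,\bigl(\tfrac{1}{1+\alpha-\alpha r^2}-1\bigr)\Bigr)\,,
\end{align*}
multiplied by the overall constant $\exp(\hat g'(0)/(1+\alpha))$ coming from the bound on $d_s$. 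Since $\tfrac{1}{1+\alpha-\alpha r^2}-1 \geq -\tfrac{\alpha}{1+\alpha}$ for $r \in (0,1)$, the exponential inside the integrand is at most $\exp(\hat g'(0)/(2\,(1+\alpha)))$, and the two exponential factors combine to $\exp(3\hat g'(0)/(2\,(1+\alpha)))$.

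The remaining step is to evaluate $\int_0^1 (1+\alpha-\alpha r^2)^{-3/2}\dd r = 1/(1+\alpha)$, which follows from the elementary trigonometric substitution $r = \sqrt{(1+\alpha)/\alpha}\,\sin\phi$. Sending $T \to \infty$ produces the stated $\eta_\infty$. The only real obstacle is bookkeeping: both exponential factors must be bounded in exactly the right regime (the one from $d_s$ at $s \to \infty$, and the one inside the integral at $r \to 1^-$) so that they combine to give precisely the advertised constant $3\hat g'(0)/(2\,(1+\alpha))$. Beyond this, the calculation is routine and reuses the integration already performed for Corollaries~\ref{cor:pert_inf} and~\ref{cor:asymp_conv}.
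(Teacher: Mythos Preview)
Your proof is correct and follows exactly the route the paper prescribes (the paper's own proof simply says the computations follow verbatim the arguments from Corollary~\ref{cor:pert_inf}). One small caveat: the inequality $\tfrac{1}{1+\alpha-\alpha r^2}-1 \ge -\tfrac{\alpha}{1+\alpha}$ as stated holds only for $\alpha\ge 0$; for $\alpha\in(-1,0)$ it reverses, but since the prefactor $-\hat g'(0)/(2\alpha)$ also flips sign, your bound $\exp(\hat g'(0)/(2(1+\alpha)))$ on the exponential remains valid for all $\alpha>-1$.
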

\begin{proof}
The computations follow verbatim the arguments from Corollary~\ref{cor:pert_inf}, using the second part of {Proposition~\ref{prop:stronger_prop}}, and are thus omitted.
\end{proof}

\bibliography{ref}

\end{document}